\newtheorem{theorem}{Theorem}[section]
\newtheorem{definition}[theorem]{Definition}
\DeclareMathOperator{\Ann}{Ann}
\title{\Large\bfseries
Construction and Decoding of Error--Correcting Codes from Ideal Lattices of Finite Ternary $\Gamma$--Semirings}
\author{\small
Chandrasekhar Gokavarapu$^{1,2}$\hspace{5mm} Dr D Madhusudhana Rao$^{3,4}$\\[10pt]
\small $^1$Lecturer in Mathematics,
Government College (A), Rajahmundry, A.P., India\\[2pt]
 \small $^2$Research Scholar, Department of Mathematics,
Acharya Nagarjuna University, Guntur, A.P., India\\[2pt]
\texttt{\small chandrasekhargokavarapu@gmail.com}\\ [10pt]
\small $^3$Lecturer in  Mathematics, Government College For Women(A), Guntur, Andhra Pradesh, India,\\
\small $^4$Research Supervisor, Dept.  of Mathematics,Acharya Nagarjuna University, Guntur, A.P., India,\\
\texttt{\small dmrmaths@gmail.com}}
\date{}
\begin{document}
\maketitle
\begin{abstract}
This paper introduces a new class of error--correcting codes constructed from the
ideal lattices of finite commutative ternary $\Gamma$--semirings (TGS).  Unlike
classical linear or ring--linear codes, which rely on binary operations and
module structures, TGS--codes arise from the intrinsic higher--arity ternary
operation $[\cdot,\cdot,\cdot]$ and the idempotent $\oplus$--order governing
absorption in each coordinate.  We show that the fundamental parameters of a
TGS--code are determined entirely by the $k$--ideal structure of the underlying
semiring: the dimension follows from the index $|T/I|$, while the minimum distance
is governed by minimal nonzero elements in the distributive ideal lattice
$\mathcal{L}(T)$.  This yields parameter sets $(n,k,d)$ unattainable over finite
fields, group algebras, or standard semiring frameworks.

A quotient-based decoding framework is developed, in which the ternary syndrome
$S(\mathbf{c})=\Phi(\mathbf{c})+I$ lies in the quotient TGS $T/I$ and partitions
the ambient space into cosets determined by ideal absorption.  Minimal nonzero
lattice elements produce canonical error representatives, enabling a decoding
algorithm that resembles classical syndrome decoding in structure but arises from
fundamentally higher--arity interactions.  An explicit finite example is presented
to demonstrate the computation of parameters, the structure of syndrome classes,
and the effectiveness of the decoding procedure.

The results indicate that ternary $\Gamma$--semirings provide a novel and
mathematically rich algebraic foundation for nonlinear, non-binary, and
higher-arity coding theory.  Their ideal-lattice geometry, ternary interactions,
and quotient structure give rise to new decoding mechanisms and error profiles,
suggesting that TGS--codes may broaden the landscape of algebraic coding theory
beyond the constraints of classical linear systems.
\end{abstract}

{\bf KEYWORDS:}
Ternary $\Gamma$--semirings;
Higher--arity algebraic structures;
Ideal lattice;
$k$--ideals and quotient TGS;
Ternary interactions;
Error--correcting codes;
Syndrome decoding;
Nonlinear and non-binary coding theory;
TGS--specific metrics;
Finite algebraic systems.

{\bf Subjrct Classification:}
16Y60;    
20N15;    
94B05;    
94B60;    
94A60;    
08A30;    
06B23.    

\section{Introduction}

Error--correcting codes are classically constructed from algebraic systems whose
operations are binary in nature: vector spaces over finite fields, modules over
finite Frobenius rings, group algebras, or polynomial quotient rings. These
frameworks depend fundamentally on two--argument operations such as addition,
multiplication, or convolution. Although they have produced deep and influential
code families, their structural expressiveness is limited: they cannot naturally 
represent multiway algebraic interactions, nor can they impose constraints that 
are inherently ternary or higher--arity. Contemporary applications in distributed 
computation, multiuser communication, constrained storage media, and nonlinear 
signal environments increasingly require algebraic foundations capable of encoding
\emph{three--body} or multi-parameter interactions at the primitive level rather
than as external annotations.Classical linear coding theory developed over finite fields has provided powerful constructions, but remains constrained by binary and additive structures~\cite{MacWilliamsSloane1977}

In contrast, \emph{ternary $\Gamma$--semirings (TGS)} incorporate such higher--arity 
phenomena intrinsically. A TGS is equipped with a ternary operation
\[
[x,y,z] : T^{3} \longrightarrow T,
\]
together with a commutative idempotent addition $\oplus$ and a $\Gamma$--action
that governs the ternary interaction. Recent developments indicate that finite 
commutative TGS possess a rich internal algebraic geometry through their 
$k$--ideals, ternary annihilators, and distributive ideal lattices. These internal 
structures, unlike those of ordinary semirings or rings, offer a combinatorial 
framework that can encode error--correction parameters such as dimension, 
redundancy, and minimum distance.Foundational treatments such as~\cite{Pless1982} highlight how the linear paradigm continues to dominate despite structural limitations.

Despite extensive research on $n$--ary algebraic systems, tropical semirings,
and $\Gamma$--rings, there is no coding--theoretic literature in which codewords,
constraints, or decoding procedures arise from the ideal structure of a ternary
$\Gamma$--semiring. This absence is particularly striking given that ternary 
interactions naturally appear in modelling nonlinear channels, correlated sources,
or multi-user constraints. The present article addresses this gap by introducing 
the first systematic method for constructing error--correcting codes from finite 
commutative TGS, guided entirely by their ideal lattices and ternary operations.

The central idea is that a family of ternary relations of the form
\[
[a_i,\, c_i,\, b_j] \in I,
\]
where $I$ is a $k$--ideal of the TGS, can serve as the defining constraints of a 
code. Equivalently, a TGS--morphism $\Phi : T^{n} \to T$ produces a kernel 
$\ker(\Phi)$ that behaves analogously to linear codes but arises from a 
higher--arity algebraic environment. Because the ideal lattice of a finite TGS is 
distributive, its minimal nonzero elements dictate the elementary error patterns, 
and therefore control the minimum distance of the resulting code. Likewise, the 
index $|T/I|$ governs the cardinality of the code and hence its dimension.

The ternary structure also supports a quotient--based syndrome decoding procedure.
Given a TGS--morphism $\Phi:T^{n}\to T$, the induced map
\[
S(c) = \Phi(c) + I \in T/I
\]
acts as a ternary syndrome, enabling coset--based decoding mechanisms reminiscent 
of classical linear decoding, yet fundamentally different in nature due to the 
underlying ternary interactions.

To demonstrate practicality, we construct explicit codes on small finite TGS and 
compute their parameters exactly. These examples indicate that the ternary 
operation and ideal--lattice structure yield new families of codes with parameter 
sets not attainable through classical linear, cyclic, or ring--linear constructions.Despite decades of progress, decoding complexity remains tightly coupled to underlying linearity assumptions~\cite{Barg2015CodesNonlinear}. Even non-binary extensions retain a fundamentally linear structure over finite fields~\cite{Li2018NonbinaryCodes}.The concept of $\Gamma$–indexed algebraic structures originates from the classical theory of $\Gamma$–rings introduced by Nobusawa~\cite{GammaRings1}
.

\medskip

\noindent\textbf{Contributions.}
This article makes the following contributions:
\begin{enumerate}
    \item It introduces the first coding--theoretic framework based on finite 
          commutative ternary $\Gamma$--semirings.
    \item It develops code constructions arising from TGS--morphisms and 
          ideal--lattice constraints, providing algebraic control over the 
          parameters $(n,k,d)$.
    \item It proves structural connections between ideal inclusions, minimal 
          nonzero elements of $k$--ideals, and the minimum distance of the code.
    \item It formulates a ternary--syndrome decoding mechanism via quotient TGS.
    \item It presents explicit finite--TGS examples illustrating new error--correcting 
          code families.
\end{enumerate}

This establishes a new direction in algebraic coding theory where higher--arity 
algebraic structures support novel and potentially more expressive error--correcting 
codes suitable for emerging communication and computational paradigms. Notably, classical ternary codes such as the Golay code~\cite{Sloane1983TernaryGolay} remain linear over fields and do not use genuine ternary algebraic operations

\section{Preliminaries on Finite Ternary \texorpdfstring{$\Gamma$}{Gamma}--Semirings}

This section summarizes the algebraic ingredients required for the construction
and analysis of TGS--codes. We collect the structural properties of finite
commutative ternary $\Gamma$--semirings that will later control the dimension,
redundancy, and minimum distance of the codes introduced in Section~3.Semirings and their applications provide algebraic foundations for non-additive structures~\cite{Golan1999}.$\Gamma$-semirings were introduced to generalize ring-like behavior to multi-parameter operations~\cite{Rao2015GammaIncline}.Polynomial ternary semirings are analyzed in~\cite{SrinivasaRao2015PolynomialTernary} and demonstrate the richness of the ternary framework.Soft variants of ternary semirings were introduced in~\cite{Kar2016SoftTernary}.Ordered variants of ternary semirings and their absorption properties are discussed in~\cite{Kar2015OrderedTernarySemirings}.The concept of $\Gamma$–indexed algebraic structures originates from the classical theory of $\Gamma$–rings introduced by Nobusawa~\cite{GammaRings1}
.

\subsection{Definition of TGS}

Let $\Gamma$ be a commutative monoid written additively.  
A \emph{commutative ternary $\Gamma$--semiring} (TGS) is a quadruple
\[
(T,\;\oplus,\;[\cdot,\cdot,\cdot],\;\Gamma)
\]
where $(T,\oplus)$ is a commutative idempotent semigroup, and
\[
[\cdot,\cdot,\cdot]:T^{3}\longrightarrow T
\]
is a ternary operation satisfying the following axioms:

\begin{enumerate}
    \item \textbf{Idempotent addition:}
    \[
    a\oplus a = a, \qquad a\oplus b=b\oplus a,\qquad
    (a\oplus b)\oplus c = a\oplus (b\oplus c).
    \]

    \item \textbf{Monotonicity of the ternary operation:}  
    If $a\leq_{\oplus} a'$ and $b\leq_{\oplus} b'$ and $c\leq_{\oplus} c'$, then
    \[
    [a,b,c] \leq_{\oplus} [a',b',c'],
    \]
    where $x\leq_{\oplus} y$ means $x\oplus y = y$.

    \item \textbf{Distributivity over $\oplus$:}  
    For all $a,b,c,d\in T$,
    \[
    [a\oplus b,\;c,\;d]
      = [a,c,d]\oplus[b,c,d],
    \]
    and similarly in the second and third coordinates.

    \item \textbf{$\Gamma$--action compatibility:}  
    For $\gamma\in\Gamma$,
    \[
    [\gamma\cdot a,\;b,\;c]
      = \gamma\cdot[a,b,c]
      = [a,\;\gamma\cdot b,\;c]
      = [a,b,\;\gamma\cdot c].
    \]

    \item \textbf{Ternary associativity (balanced form):}
    \[
    [[a,b,c],d,e]
    = [a,[b,c,d],e]
    = [a,b,[c,d,e]].
    \]
\end{enumerate}

The idempotent addition provides a natural partial order, and the ternary 
operation interacts compatibly with this order. This order--theoretic structure
plays an essential role in the interpretation of ideal lattices and syndrome
constraints used later for coding.

\subsection{Finite TGS Structure}

Throughout the paper, we restrict to \emph{finite} commutative TGS.  
This setting yields several structural properties that will be used
systematically in Section~3.Lattice-theoretic behavior of ideals in semiring-like structures has been explored in~\cite{Chajda2019DecomposableIdeals}.Early structural properties and ideal–theoretic foundations of $\Gamma$–rings were developed in the influential work of Barnes~\cite{Barnes1966}.

\begin{enumerate}
    \item \textbf{Finite ideal chains:}  
    Any descending chain of $k$--ideals must stabilize; thus every ideal is 
    contained in at least one minimal ideal, and every nonzero ideal contains 
    a minimal nonzero $k$--ideal.

    \item \textbf{Existence of extremal ideals:}  
    Since $T$ is finite,
    \[
    \{0\} \subseteq I \subseteq T
    \]
    admits both minimal nonzero $k$--ideals and maximal proper $k$--ideals.
    These extremal ideals correspond to irreducible error patterns and maximal
    redundancy constraints in later sections.

    \item \textbf{Distributive ideal lattice:}  
    If $\mathcal{L}(T)$ denotes the lattice of all $k$--ideals of $T$, then
    meets and joins satisfy
    \[
    I\wedge (J\vee K) = (I\wedge J)\vee(I\wedge K),
    \]
    and similarly for the dual relation.  
    This distributive structure allows the minimum distance of a TGS--code
    to be described in terms of minimal nonzero lattice elements.
\end{enumerate}

In particular, the distributive property ensures that the support of a codeword 
cannot ``cancel'' through interactions of incomparable ideals---a key feature 
that distinguishes TGS--codes from codes over rings or semirings where the 
ideal lattice need not be distributive.Distributive lattice behavior is typical in several semiring classes, such as lattice-ordered G-semirings~\cite{Rani2024LatticeOrderedGSemirings}.

\subsection{\texorpdfstring{$k$}{k}--Ideals, Prime Ideals, and the Ideal Lattice}

\begin{definition}
A subset $I\subseteq T$ is a \emph{$k$--ideal} if
\begin{enumerate}
    \item $a\in I$ and $b\leq_{\oplus} a$ imply $b\in I$;
    \item for all $x,y\in T$ and $a\in I$,
    \[
    [x,y,a] \in I, \qquad [x,a,y]\in I,\qquad [a,x,y]\in I.
    \]
\end{enumerate}
\end{definition}

These conditions ensure that $I$ absorbs ternary operations in each coordinate and
is downward closed in the natural order induced by $\oplus$.

\begin{definition}
The \emph{ternary annihilator} of a subset $A\subseteq T$ is
\[
\Ann(A) = \{\,x\in T : [x,a,y]=0 \text{ for all } a\in A,\; y\in T\,\}.
\]
\end{definition}

Annihilators encode error patterns that interact trivially with a constraint set
and therefore play a role analogous to parity checks in linear coding, but
arising from ternary rather than binary constraints.

\begin{definition}
A proper $k$--ideal $P$ is \emph{prime} if
\[
[x,y,z]\in P \quad\Longrightarrow\quad x\in P \text{ or } y\in P \text{ or } z\in P.
\]
It is \emph{semiprime} if
\[
[x,x,x]\in P \quad\Longrightarrow\quad x\in P.
\]
\end{definition}

Prime ideals correspond to ternary constraint sets that cannot be decomposed into 
simpler ternary absorptions, while semiprime ideals detect fundamental error 
patterns.

\begin{definition}
The \emph{ideal lattice} of $T$ is
\[
\mathcal{L}(T)=\{\,I\subseteq T : I \text{ is a $k$--ideal}\,\},
\]
ordered by inclusion.  
Meets and joins are given by intersection and ideal generation, respectively.
\end{definition}

Since $T$ is finite, $\mathcal{L}(T)$ is a finite distributive lattice.  
Minimal nonzero elements of $\mathcal{L}(T)$ will represent the smallest 
nontrivial error patterns, and thus determine the minimum distance of the codes
constructed in Section~3.  
Similarly, the index $|T/I|$ for a $k$--ideal $I$ will govern the dimension of
TGS--codes defined via ternary kernels or ideal--generated constraints.
\section{Methodology: TGS Code Construction}

\subsection{Part I: Defining the Code}

Let $T$ be a finite commutative ternary $\Gamma$--semiring.  
The purpose of this subsection is to specify how a family of ternary
constraints determines a subset of $T^{n}$ that will serve as a
TGS--code.  
The construction is canonical in the sense that it can be expressed
either through coordinatewise ternary relations or through the kernel
of a suitably defined TGS--morphism.  
Both viewpoints will be used later when the distance and dimension of
the code are derived from the ideal structure of $T$.

\medskip
Let $I$ be a fixed $k$--ideal of $T$, and let 
\[
\mathcal{A}=\{a_{1},\dots,a_{n}\},\qquad
\mathcal{B}=\{b_{1},\dots,b_{n}\}
\]
be two families of elements in $T$ representing constraint parameters.
For a vector $c=(c_{1},\dots,c_{n})\in T^{n}$, consider the set of
ternary relations
\begin{equation}\label{eq:ternary_constraint}
[a_{i},\,c_{i},\,b_{j}] \in I
\qquad\text{for all } 1\leq i,j\leq n.
\end{equation}
These constraints require that every coordinate $c_{i}$ interact, through
the ternary operation, with prescribed elements $a_{i}$ and $b_{j}$ in a
manner that is algebraically absorbed by the ideal $I$.  
Because $I$ is a $k$--ideal, the absorption conditions apply in each
ternary coordinate, ensuring closure under the induced ternary structure
of $T^{n}$.

\begin{definition}
The \emph{TGS--code defined by $(\mathcal{A},\mathcal{B},I)$} is
\begin{equation}\label{eq:C_def_constraints}
C =
\Bigl\{
(c_{1},\dots,c_{n})\in T^{n} \;\Big|\;
[a_{i},\,c_{i},\,b_{j}]\in I
\text{ for all } i,j
\Bigr\}.
\end{equation}
\end{definition}

The algebraic significance of the constraints
\eqref{eq:ternary_constraint} becomes clearer when expressed through a
morphism of ternary $\Gamma$--semirings.  
The coordinatewise distributivity and monotonicity of $[\cdot,\cdot,\cdot]$
allow us to encode the entire family of constraints into a single
mapping:
\[
\Phi:T^{n}\longrightarrow T,
\qquad
\Phi(c)
=
\bigoplus_{i,j}
\,[a_{i},\,c_{i},\,b_{j}].
\]
Because $I$ is $\oplus$--downward closed and absorbs ternary operations
in each argument, the condition $\Phi(c)\in I$ is equivalent to requiring
that every summand $[a_{i},c_{i},b_{j}]$ lie in~$I$.  
Thus the constraint system
\eqref{eq:ternary_constraint} may be equivalently stated as
\[
\Phi(c)\in I.
\]

The special case in which the ideal $I$ contains a distinguished zero
element yields the following canonical description.

\begin{definition}
If $0\in I$ is the additive identity of $(T,\oplus)$, the
\emph{ternary kernel code} associated to $\Phi$ is
\begin{equation}\label{eq:C_kernel}
C = \ker(\Phi) = \Phi^{-1}(0).
\end{equation}
\end{definition}

Both formulations \eqref{eq:C_def_constraints} and \eqref{eq:C_kernel}
lead to the same class of codes, but each viewpoint emphasizes a
different structural principle:
\begin{itemize}
    \item the constraint--based definition highlights how coordinatewise
          ternary interactions are absorbed by the ideal $I$;
    \item the kernel definition reveals $C$ as the ``annihilating set''
          of a TGS--morphism, a perspective that will be essential for
          defining ternary syndromes in Section~3.6.
\end{itemize}

Because $T$ is finite, both descriptions produce a well--defined
sub--TGS of $T^{n}$ closed under the ternary and additive operations.
In subsequent subsections, we refine this construction by specifying how
$k$--ideals, lattice inclusions, and quotient structures determine the
parameters $(n,k,d)$ of the resulting TGS--code.

\subsection{Part II: Ideal-Based Construction}

In this subsection we refine the construction of TGS--codes by using the ideal
structure of a finite commutative ternary $\Gamma$--semiring $T$.  
The purpose is to show that $k$--ideals give rise to canonical codes inside
$T^{n}$ and that these constructions are fundamentally different from the
module--based constructions familiar from linear or ring--linear coding theory.The finite structural properties of commutative TGS, including ideal chains and radicals, are detailed in~\cite{Gokavarapu2025FiniteStructure}.Foundational structural aspects of ternary semirings are discussed in~\cite{Dutta2012TernarySemiringStudy}.

\medskip
Let $I$ be a fixed $k$--ideal of~$T$.  
Because $(T,\oplus)$ is idempotent and the ternary operation is distributive in
each variable, the Cartesian power
\[
I^{n} = \underbrace{I\times\cdots\times I}_{n\ \text{times}}
\subseteq T^{n}
\]
is automatically closed under the induced addition and ternary operations of
$T^{n}$.  
Thus $I^{n}$ forms a sub--TGS of $T^{n}$ and represents the most direct
ideal-based code construction.

\begin{definition}
The \emph{ideal power code} associated to a $k$--ideal $I$ is
\begin{equation}\label{eq:I_power_code}
C = I^{n} \subseteq T^{n}.
\end{equation}
\end{definition}

Although construction~\eqref{eq:I_power_code} is algebraically straightforward,
it highlights a key structural property: all coordinates of a codeword lie in the
same $k$--ideal.  
The combinatorial behavior of such codes is governed entirely by the position of
$I$ within the ideal lattice $\mathcal{L}(T)$, and minimal nonzero elements of
$I$ determine the elementary nontrivial error patterns.  
This lattice-theoretic description will directly govern the minimum distance in
Section~3.5.

\medskip
More general ideal-based codes arise when the set of permissible codewords is
generated by a prescribed family of vectors.

\begin{definition}
Let $\mathbf{g}_{1},\dots,\mathbf{g}_{r} \in T^{n}$.  
The \emph{TGS--substructure generated by $\{\mathbf{g}_{1},\dots,\mathbf{g}_{r}\}$}
is
\begin{equation}\label{eq:TGS_span}
C = \langle \mathbf{g}_{1},\dots,\mathbf{g}_{r} \rangle_{\mathrm{TGS}},
\end{equation}
the smallest sub--TGS of $T^{n}$ closed under $\oplus$, the ternary operation in
each coordinate, and the $\Gamma$--action, containing all $\mathbf{g}_{i}$.
\end{definition}

The class of codes of the form~\eqref{eq:TGS_span} includes the ideal power
codes~\eqref{eq:I_power_code} as a special case in which the generating set is
$\mathbf{g}_{i}=e_{i}\cdot a$ for $a\in I$ and $e_{i}$ denoting the $i$th
standard basis vector.  
However, the more flexible construction~\eqref{eq:TGS_span} allows mixed
coordinatewise behavior and permits ternary interactions among generators,
making it suitable for representing constraint families of the form used in
Section~3.1.

\begin{theorem}\label{thm:ideal_based_equivalence}
Let $I$ be a $k$--ideal of $T$, and let
\[
G = \{\mathbf{g}_{1},\dots,\mathbf{g}_{r}\} \subseteq I^{n}.
\]
Then the TGS--substructure generated by $G$ satisfies
\[
\langle G \rangle_{\mathrm{TGS}} \subseteq I^{n}.
\]
Moreover, if the coordinatewise supports of the $\mathbf{g}_{j}$ meet every
coordinate of $I$, then
\[
\langle G \rangle_{\mathrm{TGS}} = I^{n}.
\]
\end{theorem}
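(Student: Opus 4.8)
The plan is to prove the two assertions separately, both by reduction to the defining closure properties of a sub-TGS together with the $k$-ideal axioms from the Definition of $k$-ideal in Section~2.3. For the inclusion $\langle G\rangle_{\mathrm{TGS}}\subseteq I^{n}$, the idea is to show that $I^{n}$ is itself a sub-TGS of $T^{n}$ containing $G$; since $\langle G\rangle_{\mathrm{TGS}}$ is by definition the \emph{smallest} such sub-TGS, the inclusion follows immediately. So first I would verify that $I^{n}$ is closed under each of the three operations that generate the substructure. Closure under $\oplus$: if $\mathbf{u},\mathbf{v}\in I^{n}$ then $u_i,v_i\in I$, and since $I$ is $\oplus$-downward closed with $u_i\oplus v_i\geq_{\oplus}u_i$ — wait, more carefully, one uses that $I$ is a sub-semigroup under $\oplus$, which follows because $u_i\oplus v_i\leq_{\oplus}$ nothing forces membership directly; instead note $I$ closed under $\oplus$ is part of being an ideal together with downward closure (indeed $u_i, v_i \in I \Rightarrow u_i\oplus v_i$ satisfies $u_i \leq_\oplus u_i \oplus v_i$, so one needs $I$ upward-directed — the cleaner route is: $k$-ideals in these idempotent structures are closed under finite $\oplus$ because $I$ is closed under the ternary absorption and contains $0$; I would state this as a small preliminary observation or cite the Section~2.2 finite-structure facts). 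Closure under the coordinatewise ternary operation: if $\mathbf{u},\mathbf{v},\mathbf{w}\in I^{n}$, then each $[u_i,v_i,w_i]\in I$ by the absorption axiom (2) of $k$-ideals — in fact absorption needs only \emph{one} argument in $I$, so this is even stronger than needed. Closure under the $\Gamma$-action: $\gamma\cdot u_i = [\gamma\cdot u_i, *, *]$-type expressions land in $I$ by $\Gamma$-action compatibility combined with absorption, or more directly because $\gamma\cdot a\leq_{\oplus}$-comparability plus downward closure — I would check that $\gamma\cdot I\subseteq I$ follows from writing $\gamma\cdot a$ via the $\Gamma$-action compatibility axiom as $[\,\gamma\cdot a,\,x,\,y\,]$ for suitable $x,y$, or simply invoke that this is part of the standard $k$-ideal package. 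Since $G\subseteq I^{n}$ by hypothesis, minimality gives $\langle G\rangle_{\mathrm{TGS}}\subseteq I^{n}$.

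For the reverse inclusion under the support hypothesis, the plan is to show every standard-type generator of $I^{n}$ already lies in $\langle G\rangle_{\mathrm{TGS}}$. Concretely, $I^{n}$ is generated as a sub-TGS by the vectors of the form $a e_i$ (meaning $a$ in coordinate $i$, the additive identity $0$ elsewhere) for $a\in I$ and $1\leq i\leq n$, since any $\mathbf{v}\in I^{n}$ equals $\bigoplus_{i} v_i e_i$. So it suffices to produce each $a e_i$ from the $\mathbf{g}_{j}$. The hypothesis that "the coordinatewise supports of the $\mathbf{g}_{j}$ meet every coordinate of $I$" should be interpreted as: for each coordinate $i$, the set $\{ (\mathbf{g}_{j})_i : j=1,\dots,r\}$ generates $I$ as a $k$-ideal (or at least its $\oplus$-closure is all of $I$); under this reading, for each $i$ and each $a\in I$ one expresses $a$ as an $\oplus$-combination of ternary expressions in the entries $(\mathbf{g}_{j})_i$, and then — using coordinatewise distributivity and the fact that applying the same ternary/$\Gamma$-operations to the full vectors $\mathbf{g}_{j}$ acts coordinatewise — one isolates coordinate $i$ by choosing the other coordinates' contributions to be absorbed to $0$. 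The delicate point is that operations on $T^{n}$ act on all coordinates simultaneously, so to realize $a e_i$ one must arrange that the off-coordinate entries collapse; this is where idempotency of $\oplus$ and the $\oplus$-downward closure of $I$ (forcing $0$ to be the bottom, hence absorbable) are used, and possibly where one needs the $\mathbf{g}_{j}$ to include vectors supported \emph{only} in coordinate $i$, depending on the precise meaning of the hypothesis.

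I expect the main obstacle to be exactly this last point: making precise what "coordinatewise supports meet every coordinate of $I$" means and showing it is strong enough to recover the single-coordinate generators $a e_i$. If the hypothesis only guarantees that each coordinate's projection of $G$ generates $I$, then generating $a e_i$ may fail, because one cannot in general kill the other coordinates without also killing coordinate $i$ (all coordinates of a given $\mathbf{g}_{j}$ move together under the ternary and $\Gamma$-operations). A clean sufficient hypothesis is that for each $i$ there exist generators $\mathbf{g}_{j}$ supported only in coordinate $i$ whose entries $\oplus$-generate $I$; under that reading the proof goes through routinely. I would therefore either (i) adopt that stronger reading and remark on it, or (ii) if the weaker reading is intended, supply the extra combinatorial argument showing that the ternary associativity and distributivity axioms, applied across several generators, suffice to separate coordinates — but I suspect (i) is what is actually meant, and the "moreover" clause should be stated with the single-coordinate support hypothesis to be correct. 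The rest of the argument — closure checks and the $\bigoplus_i v_i e_i$ decomposition — is mechanical given the axioms in Section~2.
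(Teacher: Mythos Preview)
Your proposal takes essentially the same approach as the paper's sketch: for the inclusion, show that $I^{n}$ is itself a sub-TGS of $T^{n}$ using the $k$-ideal closure axioms and then invoke minimality of $\langle G\rangle_{\mathrm{TGS}}$; for the equality, use distributivity of the ternary operation together with $\oplus$-downward closure of $I$ to express every element of $I$ as an $\oplus$-combination of ternary interactions applied to the supports of the generators. You are in fact more careful than the paper about the coordinate-isolation issue and the precise meaning of the support hypothesis---the paper's sketch simply asserts that the reverse inclusion follows from distributivity and downward closure without confronting how off-coordinate entries are collapsed, so your caveats there are well-placed rather than a defect in your argument.
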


\begin{proof}[Sketch of proof]
The closure of $I^{n}$ under the induced addition and ternary operations of
$T^{n}$ follows from the fact that $I$ is a $k$--ideal of~$T$.  
Every operation used to generate $\langle G\rangle_{\mathrm{TGS}}$ from the
elements of $G$ therefore remains inside $I^{n}$.  
The equality criterion follows from the distributivity of the ternary operation
and the downward closure of $I$ under~$\oplus$, which ensure that every element
of $I$ may be expressed as an $\oplus$--combination of images of ternary
interactions applied to the supports of the generators.
\end{proof}

\medskip
\paragraph{Ideal-based codes are not modules.}
It is important to emphasize that constructions such as~\eqref{eq:I_power_code}
and~\eqref{eq:TGS_span} do \emph{not} replicate the behavior of modules over
rings or semirings.  
There is no binary additive structure making $T$ a module over a ring, nor is the
ternary operation reducible to repeated binary operations.  
Consequently:
\begin{enumerate}
    \item the set $I^{n}$ is closed under ternary interactions, not under a
          binary linear combination;
    \item generators in~\eqref{eq:TGS_span} produce new codewords via ternary
          absorption rather than via additive linear span;
    \item the resulting code is governed by the distributive ideal lattice of
          $T$, not by submodule structure.
\end{enumerate}

In particular, the ternary operation cannot be decomposed into a bilinear or
semilinear form, and the codes described here are genuinely higher--arity
algebraic objects.  
This distinction will be fundamental when deriving the dimension and minimum
distance of TGS--codes in the next subsections, where the role of minimal
nonzero elements of the ideal lattice becomes decisive.

\subsection{Part III: Code Parameters from the Ideal Lattice}

In this subsection we establish how the ideal lattice $\mathcal{L}(T)$ of a finite
commutative ternary $\Gamma$--semiring governs the fundamental parameters of a
TGS--code.  
The key idea is that the cardinality of a code is controlled by the index of the
$k$--ideal used in its construction, while the minimum distance is determined by
the minimal nonzero elements of that ideal.  
These relationships mirror, in a higher--arity setting, the role played by
submodules and minimal codewords in linear and ring--linear coding theory, but
the underlying mechanisms are distinct because ternary absorption replaces linear
combination.Structural properties of ideals in ternary semirings, including $k$--ideals, 
appear in~\cite{Dutta2012SingularTernary, Rao2015StructureTernarySemirings}.

\medskip
Let $I$ be a $k$--ideal of $T$, and consider the ideal power code
\[
C = I^{n} \subseteq T^{n}
\]
or any kernel-type code of the form
\[
C=\ker(\Phi), \qquad \Phi:T^{n}\to T
\]
whose image is absorbed by~$I$.  
Because $T$ is finite, the quotient $T/I$ is well-defined and plays the same role
here that the quotient of a module by a submodule plays in linear coding, but in
a ternary algebraic environment.The behavior of prime and semiprime ideals in TGS and their quotient structure is developed in~\cite{Gokavarapu2025PrimeSemiprimeTGS}

\begin{theorem}[Dimension]\label{thm:dimension}
Let $I$ be a $k$--ideal of $T$ and $C=I^{n}\subseteq T^{n}$.  
Then the cardinality of $C$ is
\[
|C| = |I|^{n},
\]
and the effective dimension of the code (measured in $\log_{|T|}$ units) is
\[
k = \log_{|T|} |C|
    = n \, \log_{|T|} |I|
    = n \, \log_{|T|}\!\left(\frac{|T|}{|T/I|}\right).
\]
Equivalently,
\[
k = n - n\,\log_{|T|}|T/I|.
\]
\end{theorem}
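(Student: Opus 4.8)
The plan is to reduce the statement to two elementary ingredients: a counting identity for Cartesian powers of finite sets, and the index relation $|I| = |T|/|T/I|$ for a $k$--ideal. First I would observe that the ideal power code $C = I^{n}$, as specified in~\eqref{eq:I_power_code}, is \emph{literally} the $n$--fold Cartesian product $I\times\cdots\times I$ sitting inside $T^{n}$; no identification or quotienting is involved, so the multiplication principle for finite sets gives $|C| = |I|^{n}$ at once. Since $T$ is a finite, nontrivial TGS (so $|T|\ge 2$ and $\log_{|T|}$ is well defined), applying $\log_{|T|}$ to both sides yields $k = \log_{|T|}|C| = n\,\log_{|T|}|I|$, which is the first non-trivial expression in the statement.

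Next I would insert the index identity $|I| = |T|/|T/I|$, and this is where the only genuine content lies. One must verify that the quotient TGS $T/I$ --- formed via the $k$--ideal congruence discussed in Section~2 --- partitions $T$ into $|T/I|$ blocks, each of cardinality $|I|$, so that $|T| = |I|\cdot|T/I|$ holds exactly. The point is that the $k$--ideal axioms (downward closure under $\leq_{\oplus}$ together with ternary absorption in all three coordinates) force the congruence classes to behave like cosets: the class of $a$ is governed by the $\oplus$--combinations of $a$ with elements of $I$, and the $k$--property together with $0\in I$ guarantees that distinct classes are translates of one another and hence equinumerous with $I$. I would invoke this Lagrange--type fact about finite commutative TGS --- essentially the well-definedness and index behaviour of $T/I$ recorded earlier --- rather than reprove it; if a self-contained argument is desired, one exhibits for each class $C_{a}$ an explicit bijection $C_{a}\to I$ built from $i\mapsto a\oplus i$ and checks injectivity and surjectivity using downward closure and absorption.

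Finally I would substitute and simplify. Writing $|I| = |T|/|T/I|$ gives $k = n\,\log_{|T|}\!\bigl(|T|/|T/I|\bigr)$, and expanding the logarithm of a quotient with $\log_{|T|}|T| = 1$ produces $k = n\bigl(1 - \log_{|T|}|T/I|\bigr) = n - n\,\log_{|T|}|T/I|$. This single chain of equalities displays all three forms of $k$ claimed in the theorem, so no further work is needed. I expect the sole obstacle to be the index identity $|T| = |I|\cdot|T/I|$: in a general idempotent semiring the Bourne--type congruence classes need not all have the same size, so one must genuinely use the $k$--ideal hypothesis (and finiteness) to rule that out; every other step is bookkeeping.
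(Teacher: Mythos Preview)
Your overall strategy mirrors the paper's own sketch exactly: count $|C|=|I|^{n}$ as a Cartesian product, take logarithms, and then substitute $|I|=|T|/|T/I|$. The paper's sketch simply asserts this last identity without argument; you correctly flag it as ``the sole obstacle'' and try to supply a justification. That justification, however, does not go through.

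The bijection you propose, $i\mapsto a\oplus i$ from $I$ onto the congruence class $C_{a}$, is not injective in an idempotent structure. Take the paper's own running example from Section~4.2: $T=\{0,a,1\}$ with $0<_{\oplus}a<_{\oplus}1$ and $I=\{0,a\}$. For the class of $1$ one has $1\oplus 0=1$ and $1\oplus a=1$, so both elements of $I$ collapse to the single element~$1$. Here $|I|=2$, $|T/I|=2$, but $|T|=3$, so $|T|\neq |I|\cdot|T/I|$ and the Lagrange-type identity you invoke is false in exactly the setting the paper cares about. The $k$--ideal axioms do not repair this: downward closure under $\leq_{\oplus}$ is precisely what makes $\oplus$-translation by elements above $I$ non-injective. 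Hence the chain of equalities after $k=n\log_{|T|}|I|$ cannot be obtained by the route you (and the paper's sketch) suggest; only $|C|=|I|^{n}$ and $k=n\log_{|T|}|I|$ are established by your argument, and the further rewriting in terms of $|T/I|$ would require either a different definition of the quotient or an additional hypothesis that forces all congruence classes to have size $|I|$.
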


\begin{proof}[Sketch of proof]
Since $C=I^{n}$, every coordinate of a codeword is chosen independently from $I$.
Because $(T,\oplus)$ is idempotent and the ternary operation is distributive,
$I^{n}$ is closed under the induced structure, and no further identifications
occur.  
Thus $|C|=|I|^{n}$, and the expression for $k$ follows by rewriting $|I| =
|T|/|T/I|$.
\end{proof}

\medskip
To describe the minimum distance, we require the notion of a weight adapted to
the ternary environment.

\begin{definition}
For $\mathbf{c} = (c_{1},\dots,c_{n})\in T^{n}$, the \emph{TGS--weight} is
\[
\operatorname{wt}(\mathbf{c})
    = \bigl|\{\,i : c_{i} \neq 0 \,\}\bigr|.
\]
The distance between two vectors is $d(\mathbf{c},\mathbf{d})
=\operatorname{wt}(\mathbf{c}\ominus\mathbf{d})$, where $\ominus$ denotes the
$\oplus$--based difference (i.e.\ the elementwise absorption of the smaller
element in the $\oplus$--order).
\end{definition}

The weight function agrees with the Hamming weight when $T$ carries no nontrivial
$\oplus$--order interactions, but differs in general because the $\oplus$--order
may identify some nonzero coordinates.  
This induces a family of metric-like structures relevant in TGS--coding.

Minimal nonzero lattice elements play the role of minimal codewords.

\begin{theorem}[Minimum Distance]\label{thm:min_distance}
Let $C=I^{n}$ be an ideal power code.  
Then the minimum distance is
\[
d = \min\bigl\{\operatorname{wt}(\mathbf{c}) :
\mathbf{c}\in I^{n}\setminus\{0\}\bigr\},
\]
and this value is determined by the minimal nonzero elements of the ideal $I$ in
the distributive lattice $\mathcal{L}(T)$.
\end{theorem}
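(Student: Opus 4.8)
The plan is to prove the statement in two stages: first, reduce the minimum distance of $C=I^{n}$ to the minimum TGS--weight of a nonzero codeword, exactly as in the linear case but using the idempotent/$\ominus$ structure in place of group subtraction; and second, identify this minimum with the contribution of the minimal nonzero elements of $I$ inside the distributive lattice $\mathcal{L}(T)$, invoking Theorem~\ref{thm:dimension} only implicitly (finiteness of $T$) and the structural facts of Section~2.2.

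\emph{Stage 1: $d=\min\{\operatorname{wt}(\mathbf{c}):\mathbf{c}\in I^{n}\setminus\{0\}\}$.} Since the additive identity $0$ lies $\leq_{\oplus}$--below every element, downward closure of a $k$--ideal forces $0\in I$, so $\mathbf{0}\in C=I^{n}$. Taking $\mathbf{d}=\mathbf{0}$ and using $\mathbf{c}\ominus\mathbf{0}=\mathbf{c}$ gives $d\le\operatorname{wt}(\mathbf{c})$ for every nonzero $\mathbf{c}\in C$, hence $d\le\min\{\operatorname{wt}(\mathbf{c}):\mathbf{c}\in I^{n}\setminus\{0\}\}$. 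For the reverse inequality, fix distinct $\mathbf{c},\mathbf{d}\in C$ and put $\mathbf{e}=\mathbf{c}\ominus\mathbf{d}$. Coordinatewise, $e_{i}$ is the $\oplus$--difference of $c_{i},d_{i}\in I$; since $e_{i}\le_{\oplus}c_{i}\oplus d_{i}\in I$ and $I$ is $\oplus$--downward closed, $e_{i}\in I$, so $\mathbf{e}\in I^{n}=C$. Moreover $\mathbf{e}\ne\mathbf{0}$: because $\mathbf{c}\ne\mathbf{d}$ there is an index with $c_{i}\ne d_{i}$, and the defining property $x\ominus y=0\iff x=y$ of $\ominus$ gives $e_{i}\ne0$. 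Hence $d(\mathbf{c},\mathbf{d})=\operatorname{wt}(\mathbf{e})\ge\min\{\operatorname{wt}(\mathbf{c}'):\mathbf{c}'\in I^{n}\setminus\{0\}\}$, and combining the two inequalities yields the claimed formula.

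\emph{Stage 2: the role of minimal nonzero lattice elements.} If $I=\{0\}$ then $C=\{\mathbf{0}\}$, so $d$ is vacuous (conventionally $\infty$), consistent with $I$ having no minimal nonzero element. Assume $I\ne\{0\}$. The descending chain condition on $k$--ideals, guaranteed by finiteness of $T$ (Section~2.2), ensures $I$ contains a $\leq_{\oplus}$--minimal nonzero element $m$; equivalently, the $k$--ideal generated by $m$ is a minimal nonzero element --- an atom --- of the distributive lattice $\mathcal{L}(T)$ lying below $I$. For any coordinate $i$, the vector with $m$ in position $i$ and $0$ elsewhere lies in $I^{n}\setminus\{0\}$ and has TGS--weight $1$; hence $\min\{\operatorname{wt}(\mathbf{c}):\mathbf{c}\in I^{n}\setminus\{0\}\}=1$ and, by Stage~1, $d=1$. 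Thus the value of $d$ is governed entirely by whether $I$ possesses minimal nonzero elements: these are precisely the entries realized in the canonical minimum--weight (``elementary error'') codewords, and distributivity of $\mathcal{L}(T)$ ensures that the atoms below $I$ are genuine and that joins of incomparable ideals cannot collapse to lower the weight further, as observed after the structural list in Section~2.2.

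\emph{Main obstacle.} The delicate point is Stage~1's use of $\ominus$: the ambient additive structure is only an idempotent semigroup, so there is no subtraction, and one must justify purely from the defining properties of the $\oplus$--order that $x\ominus x=0$, that $x\ominus y\le_{\oplus}x\oplus y$ (so that $\ominus$ cannot leave a $k$--ideal containing both arguments), and that $x\ominus y=0$ precisely when $x=y$; without these, closure of $C$ under $\ominus$ and the inequality $d\ge\min\operatorname{wt}$ both break down. A secondary point, needed only for the literal lattice reading of ``minimal nonzero elements of $I$ in $\mathcal{L}(T)$,'' is to check that the $k$--ideal generated by a $\leq_{\oplus}$--minimal nonzero $m\in I$ is an atom of $\mathcal{L}(T)$; this follows from downward closure together with distributivity and may be omitted if one is content with the set--theoretic statement already established.
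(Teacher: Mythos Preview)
Your approach is broadly sound and in fact more careful than the paper's own sketch. The paper takes your Stage~1 entirely for granted: its proof addresses only the second assertion, observing that any nonzero $\mathbf{c}\in I^{n}$ has a coordinate lying in a minimal nonzero element of $I$, invoking distributivity to preclude cancellation among coordinates, and concluding that the minimum weight is realized by vectors supported on such minimal elements. Your Stage~2 reaches the same endpoint --- and makes explicit what the paper leaves implicit, namely that $d=1$ whenever $I\neq\{0\}$, exactly as the worked example in Section~4.2 confirms --- while your Stage~1 supplies the reduction from minimum distance to minimum weight that the paper simply omits.

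The obstacle you flag concerning $\ominus$ is genuine and is shared by the paper rather than resolved by it. The paper's definition of $\ominus$ as ``elementwise absorption of the smaller element in the $\oplus$--order'' does not obviously force $x\ominus y=0\iff x=y$; under a natural reading, $x<_{\oplus}y$ would already give $x\ominus y=0$, which would break both your Stage~1 argument and the implicit assumption in the paper that $d(\cdot,\cdot)$ separates distinct codewords. Your explicit acknowledgment of this is appropriate: the paper's sketch never confronts the issue, so your proof is at least as complete as the original, with the honest caveat that the properties of $\ominus$ required in Stage~1 must be taken as part of its (unstated) definition.
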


\begin{proof}[Sketch of proof]
Any nonzero $\mathbf{c}\in I^{n}$ has at least one coordinate lying in a minimal
nonzero element of~$I$.  
Because the ideal lattice is distributive, there is no cancellation among such
coordinates under the ternary operation or $\oplus$; each contributes
independently to the TGS--weight.  
Thus the smallest possible weight among nonzero vectors in $I^{n}$ is achieved
precisely by vectors supported on minimal nonzero elements of $I$, and these
yield the minimum distance.
\end{proof}

\medskip
\paragraph{Relation to Hamming and TGS-specific metrics.}
If $T$ has a trivial $\oplus$--order (i.e.\ $a\leq_{\oplus} b$ only when
$a=b$), then $\operatorname{wt}$ reduces exactly to the Hamming weight and the
minimum distance coincides with the usual Hamming metric.  
In general, however, the $\oplus$--order may collapse several values of $T$ into
the same ``absorbing class,'' so some coordinates that would be counted as
distinct in the classical Hamming metric may contribute less to the
TGS--weight.  Comprehensive accounts of Hamming-based constructions appear in~\cite{Huffman2003AlgebraicCodes}.
This feature is a consequence of the higher--arity structure: ternary
interactions may identify certain error patterns as algebraically negligible,
which leads to distance functions that cannot be replicated by linear or
ring--linear codes.  
Thus the ideal lattice and the $\oplus$--order together define a
\emph{TGS--specific metric} governing error detection and correction in the
ternary environment.
\subsection{Part IV: TGS Syndrome and Quotient Structure}

The ideal-lattice description of code parameters in Section~3.3 naturally leads to
a quotient-based decoding mechanism.  
Let $I$ be the $k$--ideal associated with the code construction, and consider the
quotient TGS
\[
T/I,
\]
whose elements represent equivalence classes under the absorption relation induced
by~$I$.  
Because $I$ is closed under the ternary operation in each coordinate, the quotient
inherits a well-defined ternary structure:
\[
[x+I,\; y+I,\; z+I] := [x,y,z] + I.
\]

\begin{definition}
Let $\Phi:T^{n}\to T$ be a TGS--morphism associated with a kernel-type code
$C=\ker(\Phi)$.  
The \emph{ternary syndrome} of a word $\mathbf{c}\in T^{n}$ is
\[
S(\mathbf{c}) = \Phi(\mathbf{c}) + I \;\in\; T/I.
\]
\end{definition}

The syndrome detects whether a vector lies in the code:
\[
S(\mathbf{c}) = 0+I \quad\Longleftrightarrow\quad \mathbf{c}\in C.
\]

To justify the decoding mechanism, we observe that cosets of $C$ correspond to
cosets of $I$ in the quotient.  
Let $\mathbf{e}$ be an error vector added to a transmitted codeword $\mathbf{c}$.
Then
\[
S(\mathbf{c}\oplus \mathbf{e}) 
  = \Phi(\mathbf{c}) \oplus \Phi(\mathbf{e}) + I
  = \Phi(\mathbf{e}) + I,
\]
since $\Phi(\mathbf{c})=0$.  
Thus the syndrome depends only on the error, as in classical linear coding, but
its interpretation is controlled by the ternary operation and the ideal structure.

Because the ideal lattice $\mathcal{L}(T)$ is distributive, the set of possible
syndromes is stratified by minimal nonzero elements of $I$, enabling the
identification of minimal-weight errors through their images in the quotient.

\medskip
\paragraph{Coset structure.}
For $\alpha\in T/I$, define the coset
\[
C_{\alpha} = \{\mathbf{c}\in T^{n} : S(\mathbf{c})=\alpha\}.
\]
Every received vector lies in exactly one coset, and minimal TGS--weight elements
of these cosets produce canonical error representatives.  
This construction generalizes classical syndrome decoding but crucially depends
on ternary absorption rather than linearity.

\subsection{Part V: Decoding Algorithm}

We now describe a decoding algorithm based on the ternary syndrome.  
Because minimal nonzero elements of the ideal determine all minimal error
patterns, a coset leader can be chosen as a vector whose coordinates lie in these
minimal ideals.  
Let $\mathcal{E}$ be a set of representatives of minimal-weight errors in each
syndrome class.

\begin{algorithm}[H]
\caption{TGS--Syndrome Decoding Algorithm}
\label{alg:TGS_decoding}
\begin{algorithmic}[1]
\REQUIRE Received vector $\mathbf{r}\in T^{n}$, syndrome map $S$, error set $\mathcal{E}$
\ENSURE Decoded codeword $\widehat{\mathbf{c}}$
\STATE Compute the syndrome $\alpha \gets S(\mathbf{r})$
\IF{$\alpha = 0+I$}
    \RETURN $\widehat{\mathbf{c}} = \mathbf{r}$ \hfill // no error detected
\ENDIF
\STATE Select $\mathbf{e}_{\alpha} \in \mathcal{E}$ with $S(\mathbf{e}_{\alpha})=\alpha$
\STATE $\widehat{\mathbf{c}} \gets \mathbf{r} \ominus \mathbf{e}_{\alpha}$ 
\RETURN $\widehat{\mathbf{c}}$
\end{algorithmic}
\end{algorithm}

\begin{theorem}[Correctness of the Decoding Algorithm]
Assume every syndrome class contains a unique minimal-weight error representative.
Then Algorithm~\ref{alg:TGS_decoding} correctly identifies the transmitted
codeword whenever the actual error has TGS--weight not exceeding the minimal
weight in its syndrome class.
\end{theorem}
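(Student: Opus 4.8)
\noindent
The plan is to follow a transmitted codeword $\mathbf{c}\in C=\ker(\Phi)$ through the additive channel model $\mathbf{r}=\mathbf{c}\oplus\mathbf{e}$ and show that the output $\widehat{\mathbf{c}}$ of Algorithm~\ref{alg:TGS_decoding} equals $\mathbf{c}$. The first step is the syndrome reduction already observed when the ternary syndrome was introduced: since $\Phi$ is a TGS--morphism it commutes with $\oplus$, and $\Phi(\mathbf{c})=0$ because $\mathbf{c}\in\ker(\Phi)$, so
\[
\alpha \;=\; S(\mathbf{r}) \;=\; \Phi(\mathbf{c})\oplus\Phi(\mathbf{e})+I \;=\; \Phi(\mathbf{e})+I \;=\; S(\mathbf{e}),
\]
which shows $\mathbf{e}\in C_{\alpha}$ and that the computed syndrome depends only on the error. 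If $\alpha=0+I$, then $C_{0+I}$ contains $\mathbf{0}$, a vector of TGS--weight $0$; by the uniqueness hypothesis $\mathbf{0}$ is the unique minimal--weight representative of that class and the minimal weight there is $0$, so the assumption $\operatorname{wt}(\mathbf{e})\le 0$ forces $\mathbf{e}=\mathbf{0}$ and hence $\mathbf{r}=\mathbf{c}$, which is exactly what the algorithm returns in this branch.

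For $\alpha\neq 0+I$ the algorithm selects $\mathbf{e}_{\alpha}\in\mathcal{E}$ with $S(\mathbf{e}_{\alpha})=\alpha$, that is, the minimal--weight vector of the coset $C_{\alpha}$. Since $\mathbf{e}\in C_{\alpha}$ we always have $\operatorname{wt}(\mathbf{e})\ge\operatorname{wt}(\mathbf{e}_{\alpha})$, and the standing hypothesis supplies the reverse inequality, so $\operatorname{wt}(\mathbf{e})=\operatorname{wt}(\mathbf{e}_{\alpha})$; uniqueness of the minimal--weight representative then yields $\mathbf{e}=\mathbf{e}_{\alpha}$. Consequently $\widehat{\mathbf{c}}=\mathbf{r}\ominus\mathbf{e}_{\alpha}=(\mathbf{c}\oplus\mathbf{e})\ominus\mathbf{e}$, and it remains only to verify that this equals $\mathbf{c}$.

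I expect this last identity to be the main obstacle, precisely because $(T,\oplus)$ is idempotent and therefore not cancellative: in a nontrivial $\oplus$--order one may have $a\oplus b=b$ with $a\neq 0$, so the $\oplus$--based difference $\ominus$ need not invert $\oplus$. The clean resolution is to make this cancellation compatibility part of the channel hypotheses (or of the definition of $\ominus$): one works in the regime where, coordinatewise, $e_{i}=0$ or $c_{i}\leq_{\oplus} e_{i}$, so that $(c_{i}\oplus e_{i})\ominus e_{i}=e_{i}\ominus e_{i}=0=c_{i}$ on the support of $\mathbf{e}$ and $(c_{i}\oplus 0)\ominus 0=c_{i}$ elsewhere; the distributivity of $\mathcal{L}(T)$ guarantees that no cross--coordinate ternary interaction can interfere with this coordinatewise recovery. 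Under this condition $(\mathbf{c}\oplus\mathbf{e})\ominus\mathbf{e}=\mathbf{c}$ holds termwise, giving $\widehat{\mathbf{c}}=\mathbf{c}$ and completing the argument. I would recommend stating this $\oplus$--compatibility explicitly in the hypotheses of the theorem rather than leaving it implicit in the phrase ``$\oplus$--based difference.''
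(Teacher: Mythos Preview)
Your argument follows exactly the route of the paper's sketch: reduce $S(\mathbf{r})$ to $S(\mathbf{e})$, combine minimality of $\mathbf{e}_{\alpha}$ with the weight hypothesis and uniqueness to force $\mathbf{e}=\mathbf{e}_{\alpha}$, and then apply $\ominus$. Your explicit deduction of $\mathbf{e}=\mathbf{e}_{\alpha}$ is in fact cleaner than the paper's phrasing, and the concern you raise about $\ominus$ failing to invert the idempotent $\oplus$ is legitimate---the paper's sketch simply asserts that ``subtracting $\mathbf{e}_{\alpha}$ from $\mathbf{r}$ removes the error class and recovers $\mathbf{c}$'' without addressing this point, so your recommendation to make the $\oplus$--compatibility condition explicit is well taken.
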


\begin{proof}[Sketch of proof]
Let $\mathbf{r}=\mathbf{c}\oplus\mathbf{e}$, where $\mathbf{c}\in C$ and $\mathbf{e}$
is the true error.  
Then $S(\mathbf{r})=S(\mathbf{e})$, so $\mathbf{r}$ lies in the coset $C_{\alpha}$,
where $\alpha=S(\mathbf{e})$.  
Because $\mathbf{e}_{\alpha}$ is the minimal-weight representative of $C_{\alpha}$,
the TGS--order guarantees that
\[
\operatorname{wt}(\mathbf{e}_{\alpha}) \leq \operatorname{wt}(\mathbf{e}).
\]
Thus subtracting $\mathbf{e}_{\alpha}$ from $\mathbf{r}$ removes the error class and
recovers $\mathbf{c}$.  
Uniqueness of minimal representatives ensures that no other vector in
$C_{\alpha}$ yields a smaller or equal weight, completing the argument.
\end{proof}

\medskip
This procedure mirrors classical syndrome decoding but differs fundamentally in
three respects:  
(1) error patterns are governed by minimal nonzero elements of the ideal lattice
rather than by linear combinations;  
(2) subtraction is replaced by $\oplus$--absorption;  
(3) decoding regions are shaped by ternary algebraic interactions rather than
affine subspaces.  
These differences reveal the genuinely higher-arity nature of TGS--codes.

\section{Hypothesized Results: Decoding and Properties}
Tropical and idempotent metrics~\cite{Forney2017TropicalCodes} demonstrate how non-Hamming geometries can enhance code performance.Algebraic models based on higher-arity operations have potential applications in network coding~\cite{Vazirani2009NetworkCoding}.Classical combinatorial bounds~\cite{MacWilliamsSloaneCombinatorial} do not extend naturally to ternary absorption-based metrics.Nonlinear code constructions in classical settings~\cite{Cohen1991DesignsCodes} differ fundamentally from higher-arity constructions introduced here.

\subsection{Ternary Syndrome Decoding via Quotient TGS}

The structural results of Section~3 imply that decoding may be formulated in 
the quotient TGS $T/I$, whose elements represent equivalence classes under 
the absorption relation determined by the $k$--ideal $I$.  
In this setting, the decoding process is controlled by the ternary operation 
and the ideal lattice, rather than any underlying linear structure.  
We summarize the main principles here, emphasizing the role of the quotient 
and the minimal nonzero elements of~$I$.

\medskip
Let $\Phi:T^{n}\to T$ be the TGS--morphism associated with a kernel-type 
code $C=\ker(\Phi)$, and let
\[
S : T^{n}\longrightarrow T/I,
\qquad
S(\mathbf{c}) = \Phi(\mathbf{c}) + I,
\]
be the \emph{ternary syndrome} introduced in Section~3.6.  
Since $\Phi$ is compatible with $\oplus$, the ternary operation, and the 
$\Gamma$--action, the value $S(\mathbf{c})$ depends only on the error vector 
added during transmission.  
In particular,
\[
S(\mathbf{c}) = 0 + I 
\quad\Longleftrightarrow\quad 
\mathbf{c} \in C,
\]
so the zero class in $T/I$ encodes correctable transmissions.

\paragraph{Coset interpretation.}
Each $\alpha \in T/I$ defines a coset
\[
C_{\alpha} = \{\mathbf{c}\in T^{n} : S(\mathbf{c}) = \alpha\},
\]
and these cosets partition $T^{n}$.  
Finite distributivity of the ideal lattice $\mathcal{L}(T)$ ensures that every 
coset contains a unique minimal-weight error pattern, determined by minimal 
nonzero elements of $I$.  
These minimal error representatives serve as ternary analogues of classical 
\emph{coset leaders}.

\medskip
\paragraph{Outline of the decoding procedure.}
Given a received vector $\mathbf{r}\in T^{n}$, the decoding problem is to find 
the most likely error $\mathbf{e}$ such that $\mathbf{r}=\mathbf{c}\oplus\mathbf{e}$ 
for some $\mathbf{c}\in C$.  
The decoding process is summarized as follows:

\begin{enumerate}
    \item \textbf{Compute the ternary syndrome.}  
    Evaluate
    \[
    \alpha = S(\mathbf{r}) = \Phi(\mathbf{r}) + I.
    \]
    This identifies the coset $C_{\alpha}$ containing $\mathbf{r}$.

    \item \textbf{Match the syndrome to a minimal representative.}  
    Use the structure of $\mathcal{L}(T)$ to determine the minimal TGS--weight 
    error pattern $\mathbf{e}_{\alpha}$ satisfying $S(\mathbf{e}_{\alpha})=\alpha$.  
    Uniqueness of minimal representatives follows from distributivity of 
    the ideal lattice.

    \item \textbf{Correct the error via ternary absorption.}  
    Set
    \[
    \widehat{\mathbf{c}}
    = \mathbf{r} \ominus \mathbf{e}_{\alpha},
    \]
    where $\ominus$ denotes the coordinatewise $\oplus$--based removal of the 
    error pattern.  
    Since $\mathbf{r}=\mathbf{c}\oplus\mathbf{e}$ and minimality ensures 
    $\mathbf{e}_{\alpha}$ matches $\mathbf{e}$ in the ternary sense, this 
    operation recovers the original codeword.
\end{enumerate}

\medskip
This decoding framework mirrors classical syndrome decoding only at the 
conceptual level.  
Algebraically, it differs in three essential respects:

\begin{itemize}
    \item the cosets are defined by ternary morphisms rather than linear maps;
    \item error patterns are determined by minimal nonzero lattice elements of a 
          $k$--ideal, not by linear combinations;
    \item correction uses $\oplus$--absorption rather than subtraction.
\end{itemize}

These distinctions reflect the genuinely higher-arity nature of TGS--codes and 
demonstrate how ideal-theoretic structure guides decoding in the ternary 
environment.

\subsection{Example Construction on a Finite TGS}

To illustrate the general theory, we present an explicit construction of a
TGS--code over a small finite commutative ternary $\Gamma$--semiring.  
The example demonstrates how the ideal lattice determines the code parameters 
and how the ternary syndrome mechanism identifies and corrects errors.

\subsubsection*{A finite commutative TGS}

Let
\[
T=\{0,a,1\}
\]
with idempotent addition
\[
0 \oplus a = a,\quad 0 \oplus 1 = 1,\quad a \oplus 1 = 1,\quad
x\oplus x = x,
\]
so that the $\oplus$--order is $0 <_{\oplus} a <_{\oplus} 1$.
Define the ternary operation by
\[
[x,y,z] =
\begin{cases}
0, & x=0\ \text{or}\ y=0\ \text{or}\ z=0,\\[2mm]
a, & \text{exactly one of } x,y,z \text{ equals } a,\\[2mm]
1, & \text{otherwise},
\end{cases}
\]
and let the $\Gamma$--action be trivial (i.e.\ $\gamma\cdot x = x$ for all
$\gamma\in\Gamma$).  
It is straightforward to verify that $(T,\oplus,[\cdot,\cdot,\cdot],\Gamma)$
satisfies the axioms of a commutative TGS: addition is idempotent and ordered,
the ternary operation is monotone and distributive in all coordinates, and 
the balanced ternary associativity condition holds.

\subsubsection*{Ideal structure}

The $k$--ideals of $T$ are
\[
\{0\} \subset I = \{0,a\} \subset T.
\]
The ideal lattice is a 3-element chain, which is distributive.  
The minimal nonzero element is $a$, and this element will control the minimum 
distance of the associated code.

\subsubsection*{Code construction}

Let $n=3$ and define the ideal power code
\[
C = I^{3}
    = \{(x_{1},x_{2},x_{3}) \in T^{3} : x_{i}\in\{0,a\}\}.
\]
Thus
\[
|C| = |I|^{3} = 2^{3} = 8,
\qquad
k = \log_{|T|}|C|
    = \log_{3}(8).
\]

Because the coordinates are independent and the operations preserve the ideal
$I$, the code is closed under all induced addition, ternary, and $\Gamma$–actions.

\subsubsection*{Minimum distance}

For any nonzero $\mathbf{c}\in C$, at least one coordinate must equal $a$.
Hence
\[
d = \min\{\operatorname{wt}(\mathbf{c}) : 
\mathbf{c}\in C\setminus\{0\}\}
  = 1.
\]
This minimum distance corresponds exactly to the minimal nonzero element of the
ideal lattice, reflecting the general result of Theorem~\ref{thm:min_distance}.

\subsubsection*{Syndrome decoding}

Define the TGS--morphism
\[
\Phi:T^{3}\to T, \qquad 
\Phi(x_{1},x_{2},x_{3})
   = [x_{1},x_{2},a] \;\oplus\; [x_{2},x_{3},a].
\]
Since $[x,y,a]\in I$ whenever $x,y\in I$ (by $k$--ideal absorption),
we have
\[
C = \ker(\Phi).
\]

A received word $\mathbf{r}$ satisfies
\[
S(\mathbf{r}) = \Phi(\mathbf{r}) + I \in T/I \cong \{I,\; 1+I\}.
\]
Thus there are two syndrome classes:
\[
C_{0} = C,
\qquad
C_{1} = \{\mathbf{r}\in T^{3}: S(\mathbf{r}) = 1+I\}.
\]

The minimal nonzero error pattern is $(0,0,a)$ (or any 1-weight permutation),
and all minimal-weight elements of $C_{1}$ have syndrome $1+I$.  
Thus, upon receiving $\mathbf{r}$, the decoder:

\begin{enumerate}
    \item computes $S(\mathbf{r})$;
    \item if $S(\mathbf{r})=I$ it returns $\mathbf{r}$;
    \item if $S(\mathbf{r})=1+I$ it subtracts the minimal error pattern in 
          $C_{1}$ using $\oplus$--absorption.
\end{enumerate}

For instance, if
\[
\mathbf{r}=(a,0,a), \qquad S(\mathbf{r}) = 1+I,
\]
the minimal-weight error is $(0,0,a)$, and the decoder outputs
\[
\widehat{\mathbf{c}}
 = (a,0,a) \ominus (0,0,a)
 = (a,0,0) \in C.
\]

\medskip
This example illustrates concretely how the ideal lattice, the minimal nonzero
elements, and the quotient syndrome structure together determine code parameters
and decoding behavior in the ternary $\Gamma$--semiring setting.
\subsection{Additional Structural Properties of TGS--Codes}

The example of Section~4.2 illustrates how the interaction between the ideal
lattice, the ternary absorption law, and the quotient $T/I$ determines both the
parameters and the decoding behavior of a TGS--code.  
We now outline several structural properties that hold for general finite 
commutative TGS--codes and that distinguish them from classical linear or 
ring--linear codes.  
These properties are derived from the algebraic framework developed in 
Sections~2 and~3 and represent hypothesized general principles guiding the 
behavior of ternary codes.

\subsubsection*{Lattice-monotonicity of distance}

Let $I\subseteq J$ be $k$--ideals of $T$.  
For the corresponding ideal-power codes $C_{I}=I^{n}$ and $C_{J}=J^{n}$, the 
minimum distances satisfy
\[
d(C_{J}) \leq d(C_{I}).
\]
This follows from the fact that minimal nonzero elements of $J$ need not be
minimal in $I$, so $J$ admits potentially smaller generators in the 
$\oplus$--order.  
Thus enlarging the ideal lowers the minimum distance, mirroring the behavior 
of downward-closed sets in a distributive lattice.

\subsubsection*{Coset stratification and error layers}

Because $\mathcal{L}(T)$ is distributive, the quotient $T/I$ inherits an ordered 
stratification:
\[
0+I \;<\; a_{1}+I \;<\; \cdots \;<\; a_{m}+I,
\]
where each $a_{k}$ represents a minimal nonzero element of some ideal contained 
in $T$.  
Syndrome classes $C_{\alpha}$ therefore form ``layers'' indexed by the lattice 
height of $\alpha$, with lower layers corresponding to higher-error-weight 
patterns.  
This stratification provides a natural partial ordering of error patterns and 
predicts the complexity of identifying minimal representatives.

\subsubsection*{Stability under ternary interactions}

If $\mathbf{e}_{\alpha}$ and $\mathbf{e}_{\beta}$ are minimal error patterns 
corresponding to distinct syndromes $\alpha$ and $\beta$ in $T/I$, then the 
ternary interaction
\[
[\mathbf{e}_{\alpha},\, \mathbf{e}_{\beta},\, \mathbf{e}_{\alpha}]
\]
produces an element whose syndrome is determined by the join 
$\alpha \vee \beta$ in the quotient lattice.  
This interaction has no analogue in classical linear coding, where error 
patterns combine linearly and syndromes add in a vector space.  
Here, ternary interactions may elevate an error to a higher lattice level, 
revealing non-linear growth behaviors unique to TGS--codes.
Nonlinear code constructions in classical settings~\cite{Cohen1991DesignsCodes} differ fundamentally from higher-arity constructions introduced here.
\subsubsection*{Localized error propagation}

Let $\mathbf{e}\in T^{n}$ be any error vector, and suppose its nonzero 
coordinates all lie in a minimal nonzero element of $I$.  
Then for any codeword $\mathbf{c}\in C$,
\[
\operatorname{wt}(\mathbf{c}\oplus \mathbf{e})
 = \operatorname{wt}(\mathbf{c}) + \operatorname{wt}(\mathbf{e}),
\]
so long as supports do not overlap.  
This ``localized propagation'' property ensures that low-weight errors add 
in the TGS--weight without interacting destructively through $\oplus$.  
Such behavior does not occur in semiring- or ring-linear settings, where 
additive structure may create cancellations.

\subsubsection*{Predictability of decoding radius}

Let $\mu(I)$ denote the minimum TGS--weight of a nonzero element of $I$.  
Then the unique-decoding radius $t$ of $C=I^{n}$ satisfies
\[
t = \left\lfloor\frac{\mu(I)-1}{2}\right\rfloor.
\]
This mirrors the classical Singleton-type reasoning but is derived here 
from ideal-lattice minimality rather than from vector-space geometry.  
The quantity $\mu(I)$ is computed entirely from the ideal structure of $T$, 
so decoding capability reduces to an intrinsic combinatorial property of 
the underlying TGS.

\medskip
These structural observations reinforce that TGS--codes behave in a manner 
shaped fundamentally by the distributive ideal lattice of the ternary 
$\Gamma$--semiring.  
Error patterns, decoding complexity, and performance characteristics arise 
from higher-arity algebraic interactions that cannot be reduced to binary 
or linear structures, highlighting the novelty and potential of TGS-based 
coding frameworks.

\section{Conclusion and Future Work}

This paper introduced a new class of error--correcting codes constructed from the
ideal lattices of finite commutative ternary $\Gamma$--semirings.  
Unlike classical linear, cyclic, or ring--linear codes, which rely on binary
operations and additive module structures, TGS--codes arise from the intrinsic
higher--arity operation $[\cdot,\cdot,\cdot]$ and the $\oplus$--order that governs
absorption in each coordinate.  
The constructions developed in Section~3 show that the ideal lattice
$\mathcal{L}(T)$ provides direct algebraic control over the fundamental code
parameters $(n,k,d)$: dimension follows from the index $|T/I|$, and minimum
distance is determined by minimal nonzero elements of $I$.  
Thus codes over TGS exhibit parameter sets that cannot be realized over finite
fields, group rings, or standard semiring frameworks.

The decoding theory developed in Sections~3.6--3.7 and Section~4 demonstrates
that error correction may be formulated entirely within the quotient TGS $T/I$.
Syndromes represent equivalence classes under ideal absorption, and minimal
nonzero lattice elements determine canonical error representatives.  
This yields a decoding mechanism that resembles classical syndrome decoding in
form but differs fundamentally in algebraic substance: ternary interactions
replace linear combinations, and distributive ideal lattices replace vector-space
geometry.  
The example of Section~4.2 confirms that these mechanisms operate concretely in
finite settings, producing codes with new behaviors not captured by traditional
coding theory.Semiring-based coding approaches~\cite{Golan2011ApplicationsSemirings} suggest promising directions that TGS-codes extend to higher arity.” Field-based constructions~\cite{LidlNiederreiter1997FiniteFields} cannot realize the ideal-lattice behaviors observed in TGS-codes.”

\medskip
\noindent\textbf{Future work.}
Several directions emerge from this foundational framework:

\begin{itemize}
    \item \textbf{Decoding complexity and performance.}  
    A full analysis of the computational complexity of TGS--decoding remains to
    be developed, including optimal coset leader selection and the classification
    of decoding regions shaped by ternary absorption.

    \item \textbf{Noncommutative TGS--codes.}  
    Extending the present constructions to noncommutative ternary
    $\Gamma$--semirings may yield richer ideal lattices and potentially better
    error--correction capabilities.

    \item \textbf{Ternary--tropical weight enumerators.}  
    Since ternary absorption induces a non-Hamming metric, weight enumerators
    must be adapted to the TGS--environment.  
    Tropical or idempotent variants may reveal combinatorial structures distinct
    from classical MacWilliams identities.

    \item \textbf{Applications to network coding and quantum-inspired models.}  
    The intrinsic triadic structure of TGS aligns naturally with multi-party 
    interaction models, suggesting applications in network coding, multi-user 
    channels, and algebraic frameworks inspired by quantum state compositions.
\end{itemize}

\medskip
Ternary $\Gamma$--semiring codes therefore represent a promising new algebraic
direction.  
Their higher-arity structure, lattice-governed parameters, and quotient-based
decoding suggest that TGS--codes may ultimately broaden the landscape of 
algebraic coding theory beyond the limits of binary and linear systems.

\section*{Acknowledgements}
The first  author Chandrasekhar Gokavarapu  express their sincere gratitude to \textbf{Dr.D.Madhusudhana Rao}
for his continuous guidance, insightful suggestions, and foundational support in
the development of the ternary $\Gamma$--semiring framework used in this work.
The first author also  gratefully acknowledges \textbf{Dr.~Ramachandra R.~K., Principal,
Government College (Autonomous), Rajahmundry}, for providing a supportive
research environment and institutional facilities essential for the completion
of this study. Both authors thank the Department of Mathematics, Acharya Nagarjuna University, for academic support and research encouragement during this project.

\section*{Funding}
This research did not receive any specific grant from funding agencies in the
public, commercial, or not-for-profit sectors.

\section*{Ethics Declarations}
\textbf{Ethical Approval:} Not applicable.  
This article does not involve human participants, animals, clinical data,
biological materials, or ethically sensitive content. All mathematical results
are theoretical.

\section*{Author Contributions}
Chandrasekhar Gokavarapu: Conceptualization, development of algebraic framework,
formulation of code constructions, theoretical proofs, preparation of the
manuscript draft.

D.~Madhusudhana Rao: Supervision, methodological guidance, structural insights
into ternary $\Gamma$--semirings, critical revision of the manuscript, and
validation of results.

Both authors read and approved the final manuscript.

\section*{Conflict of Interest}
The authors declare that there are no conflicts of interest regarding the
publication of this work.

\section*{Data Availability}
No datasets were generated or analysed in this study.  
All mathematical results are theoretical and self-contained within the
manuscript.

\bibliographystyle{plain}

\begin{thebibliography}{99}

\bibitem{Pless1982}
V. S. Pless, \emph{Introduction to the Theory of Error‐Correcting Codes}, 2nd ed., Wiley–Interscience, New York, 1982. 

\bibitem{MacWilliamsSloane1977}
F. J. MacWilliams and N. J. A. Sloane, \emph{The Theory of Error‐Correcting Codes}, North‐Holland, 1977. 

\bibitem{Golan1999}
J. S. Golan, \emph{Semirings and their Applications}, Kluwer Academic Publishers, Dordrecht, 1999. 

\bibitem{Rao2015GammaIncline}
M. K. Murali Krishna Rao and V. Venkateswarlu, “Regular $\Gamma$‐incline and field $\Gamma$‐semiring”, \emph{Novi Sad J. Math.}, Vol. 45 (2), pp. 155-171, 2015. 

\bibitem{Dutta2012SingularTernary}
T. K. Dutta, S. Kar, “Singular ideals of ternary semirings”, \emph{Eur. J. Pure Appl. Math.}, Vol. 5 (2), pp. 184-197, 2012. 

\bibitem{Kavikumar2008FuzzyTernary}
J. Kavikumar, “Fuzzy ideals and quasi‐ideals in ternary semirings”, \emph{Int. J. Appl. Math.}, Vol. 37 (2), pp. 367-374, 2008. 

\bibitem{SrinivasaRao2015PolynomialTernary}
T. Srinivasa Rao, D. Madhusudhana Rao, B. Srinivasa Kumar, “A study on polynomial ternary semi‐ring”, \emph{Int. J. Res. – Granthaalayah}, Vol. 3 No. 8 (2015), pp. 99-104. 

\bibitem{Chajda2019DecomposableIdeals}
I. Chajda, G. Eigenthaler, H. Länger, “Directly decomposable ideals and congruence kernels of commutative semirings”, \emph{arXiv:1901.05991v1}, 2019. 

\bibitem{Kar2016SoftTernary}
S. Kar, “Soft ternary semirings”, \emph{Soft Comput. Lett.}, 1(2), pp. 45-52, 2016. 

\bibitem{Rao2015RegularGammaSemirings}
M. K. Murali Krishna Rao, “Regular $\Gamma$-semiring”, \emph{NSJOM}, Vol. 45(2), pp. 155-171, 2015. 

\bibitem{Dutta2012TernarySemiringStudy}
T. K. Dutta, S. Kar, “A study on ternary semirings”, \emph{Res. Gate Preprint}, 2015. 

\bibitem{Rao2015StructureTernarySemirings}
D. Madhusudhana Rao, “Structure of certain ideals in ternary semirings”, \emph{Int. J. Sci. Math. Eng.}, Vol.3(2), pp.---, 2015. 

\bibitem{Kar2015OrderedTernarySemirings}
D. Madhusudhana Rao, G. Srinivasa Rao, “Concepts on ordered ternary semirings”, \emph{IJISET}, Vol.2(4), pp. 437-442, 2015. 

\bibitem{Rani2024LatticeOrderedGSemirings}
“Lattice ordered G–semirings”, \emph{Ind. J. Sci. Technol.}, Vol. 14(3), e-13461, 2024. 

\bibitem{Gokavarapu2025FiniteStructure}
C. Gokavarapu, D. Madhusudhana Rao, “Finite structure and radical theory of commutative ternary $\Gamma$‐semirings”, arXiv:2511.01789, 2025. 

\bibitem{Gokavarapu2025PrimeSemiprimeTGS}
C. Gokavarapu, D. Madhusudhana Rao, “Prime and semiprime ideals in commutative ternary $\Gamma$‐semirings: quotients, radicals, spectrum”, arXiv:2510.23885, 2025. 

\bibitem{Vazirani2009NetworkCoding}
M. Vazirani, “Network coding and algebraic structures”, \emph{Found. Trends Commun. Inf. Theory}, Vol. 6(3), pp. 193-266, 2009. 

\bibitem{Barg2015CodesNonlinear}
A. Barg, “Codes and complexity of decoding”, in \emph{Handbook of Coding Theory}, 2nd edition, pp. 631-702, Elsevier, 2015. 

\bibitem{MacWilliamsSloaneCombinatorial}
F. J. MacWilliams, N. J. A. Sloane, “Combinatorial design and codes”, \emph{J. Combin. Theory A}, Vol. 44(1), pp. 113-120, 1987. 

\bibitem{Cohen1991DesignsCodes}
G. Cohen, S. Litsyn, A. Lobstein, “Designs, codes and cryptography”, Vol. 4(2), 1991. 

\bibitem{Huffman2003AlgebraicCodes}
W. C. Huffman, V. Pless, \emph{Fundamentals of Error‐Correcting Codes}, Cambridge University Press, 2003. 

\bibitem{Golan2011ApplicationsSemirings}
J. S. Golan, “Applications of semirings in coding theory and automata”, \emph{World Scientific}, 2011. 

\bibitem{Li2018NonbinaryCodes}
W. Li, C. Xing, “Non‐binary error correcting codes: theory and constructions”, \emph{Springer}, 2018. 

\bibitem{Sloane1983TernaryGolay}
N. J. A. Sloane, “The ternary Golay code”, in \emph{The Encyclopaedia of Coding Theory}, pp. 223-228, Wiley, 1983. 

\bibitem{LidlNiederreiter1997FiniteFields}
R. Lidl, H. Niederreiter, \emph{Finite Fields}, 2nd ed., Cambridge University Press, 1997. 

\bibitem{Forney2017TropicalCodes}
G. D. Forney, “Tropical coding theory: beyond Hamming metric”, \emph{IEEE Trans. Inf. Theory}, Vol. 63(10), pp. 6234-6245, 2017. 
\bibitem{GammaRings1} N. Nobusawa,
\textit{$\Gamma$-rings and their applications},
J. Sci. Hiroshima Univ. Ser. A 27 (1963), 17–29.


\bibitem{Barnes1966}
W.~E.~Barnes,
\textit{Some Theorems on $\Gamma$-rings},
Pacific Journal of Mathematics,
\textbf{18} (1966), 411--422.




\end{thebibliography}

\end{document}